\newcommand{\oprocendsymbol}{\hbox{$\bullet$}}
\newcommand{\oprocend}{\relax\ifmmode\else\unskip\hfill\fi\oprocendsymbol}
\newcommand{\Ec}{\mathcal{E}}
\newcommand{\Gc}{\mathcal{G}}
\newcommand{\Hc}{\mathcal{H}}
\newcommand{\Uc}{\mathcal{U}}
\newcommand{\Vc}{\mathcal{V}}
\newcommand{\abs}[1]{\left| #1 \right|}
\newcommand{\real}{\ensuremath{\mathbb{R}}}
\newcommand{\Torus}{\mathbb{T}}
\newcommand{\lambdamin}{\lambda_{\text{min}}}
\newcommand{\trace}{\text{tr}}
\newcommand{\diag}{\text{diag}}
\newcommand{\Hess}{H}
\newtheorem{theorem}{Theorem}
\newtheorem{lemma}{Lemma}
\newtheorem{example}{Example}
\newtheorem{problem}{Problem}
\newtheorem{conjecture}{Conjecture}
\newenvironment{proofsketch}{\paragraph*{Proof (Sketch)}}{\hfill$\blacksquare$}
\newcommand{\longthmtitle}[1]{\mbox{} \emph{(#1)}}
\newcommand{\N}{\mathbb{N}}		
\newcommand{\Z}{\mathbb{Z}}		
\newcommand*{\QEDA}{\hfill\ensuremath{\triangle}}   %
\title{\LARGE \bf
Global Optimization Through Heterogeneous Oscillator Ising Machines
}
\author{Ahmed Allibhoy, Arthur N. Montanari, Fabio Pasqualetti, Adilson E. Motter
\thanks{This material is based upon work supported in part by ARO award W911NF-24-1-0228.
A. Allibhoy and F. Pasqualetti are with the
Department of Mechanical Engineering at the 
University of California, Riverside, Riverside, CA, 92521, USA. 
Emails: \texttt{\{aallibho, fabiopas\}@ucr.edu}}%
\thanks{A. N. Montanari and A. E. Motter are with the Center for Network Dynamics and 
the Department of Physics and
Astronomy, Northwestern University, Evanston, IL 60208 USA. A. E. Motter is also with the Department of Engineering Sciences \& Applied Mathematics and the Northwestern Institute on Complex Systems, Northwestern University, Evanston, IL 60208, USA. 
Emails: \texttt{\{arthur.montanari, motter\}@northwestern.edu}}
}%
\begin{document}
\maketitle
\thispagestyle{empty}
\pagestyle{empty}

\begin{abstract}
%
%

Oscillator Ising machines (OIMs) are networks of coupled  oscillators that
seek the minimum energy state of an Ising model. Since
many NP-hard problems are equivalent to the minimization of an Ising Hamiltonian, OIMs 
have emerged as a promising
computing paradigm for solving complex optimization  problems 
that are intractable on existing
digital computers. However, their performance is sensitive to the choice of tunable
parameters, and convergence guarantees are mostly lacking.
Here, we 
show that lower energy states 
are more likely to be stable, and that convergence to the global minimizer is often 
improved
by introducing random heterogeneities in the regularization
parameters. Our analysis relates 
the stability properties of Ising configurations to the
spectral properties of a signed graph Laplacian. By examining the spectra of random ensembles of these graphs,
we show that the probability of an equilibrium being asymptotically stable depends
inversely on the value of the Ising Hamiltonian, biasing the system toward low-energy states. Our
numerical results confirm our findings and demonstrate that heterogeneously designed OIMs efficiently converge
to globally optimal solutions with high probability.
\end{abstract}


\vspace{-2.5ex}
\section{Introduction}
The increasing difficulty of scaling up conventional digital computers\textemdash
coupled with the growing relevance of 
combinatorial optimization in artificial intelligence, 
machine learning, and data processing\textemdash has spurred the search for 
novel computing paradigms. One promising alternative is the development of \textit{Ising machines}, 
which are physical systems designed to seek the minimizers 
of the Ising model. The Ising model was originally proposed to 
characterize the magnetic properties of 
materials but is now of interest in 
computing since many NP-hard problems (e.g., graph maximum cut and boolean satisfiability) have been shown to be equivalent to the minimization of an Ising Hamiltonian \cite{AL:14}. As a result, Ising machines can be used as specialized 
hardware 
solvers for many important computing tasks. 

Ising machines may be realized as physical devices using many different 
technologies, including magnetic tunnel junctions, spin-torque arrays,
optoelectronic oscillators, and CMOS circuits. In this paper, 
we focus on \emph{oscillator Ising machines} (OIMs), 
which are implemented using networks of 
coupled phase oscillators. In these machines, the minimizers of the Ising model are encoded as synchronization patterns of the oscillator network \cite{mallick2020using}. 
The dynamics of these systems are related
to the well-studied Kuramoto model \cite{rodrigues2016kuramoto}; however the presence of 
negative edge weights, along with an additional forcing term, 
leads to novel complex dynamical behaviors that are not typically seen 
in classical models of synchronization. The performance of OIMs are sensitive to 
the choice of their regularization parameters. If such parameters are incorrectly tuned, the system may converge to equilibria corresponding to
suboptimal solutions and, as of now, 
there are no formal 
guarantees of convergence to the global minimizers. 

\subsubsection*{Related Work}
Ising machines have received widespread 
interest in recent years;  e.g. see \cite{GC-WP:20, NM-PLM-TB:22} and 
references therein for an overview of 
these devices, applications to computing,
and hardware implementations. 
The realization of Ising machines 
through coupled oscillator networks 
was introduced in~\cite{TW-JR:19}, where it 
was shown that these systems can be interpreted as a gradient flow of an 
objective function 
related to the Ising Hamiltonian. Reference~\cite{YC-MKB-NS-ZL:24} 
classifies all possible equilibria in OIMs 
and their structural stability 
properties. Numerical analysis of the stability of these systems has shown that, for certain graphs, the oscillator parameters can be homogeneously tuned to force the globally optimal solution 
to be stable while most suboptimal solutions are left unstable \cite{MKB-ZL-NS:23}. As a result, these OIMs become biased towards global minimizers. However, to the best of 
our knowledge, convergence guarantees are still lacking, as is a characterization of the relationship between the equilibria of the oscillator network and the corresponding values of the Ising Hamiltonian.
This paper aims to narrow this gap. A key aspect of our technical approach 
is the use of signed graphs and random matrix theory
for the stability analysis of the OIMs. 
Signed graphs have been often used to characterize mutualistic and antagonistic interactions in the study 
of consensus \cite{LP-HS-MM:16, SA-MHDB-MM:17, GS-CA-JSB:19} and synchronization dynamics
\cite{HH-SHS:11-pre, HH-SHS:11-prl}, among others. Complementary approaches in neurocomputation have also investigated the spectra of random network ensembles in order to characterize the multistability of networks with competing equilibria \cite{KR-LFA:06, YH-HS:22}. 

\subsubsection*{Statement of Contributions}
In this paper, we analyze the stability 
properties of OIMs. 
Our theoretical analysis is based on a novel graph-theoretic characterization of the equilibria of OIMs that associates the spin
configurations of the Ising model to signed graphs. This formalism allows us to relate both the energy and
the stability of a given equilibrium to the 
spectral properties of a signed Laplacian matrix. 
We then establish necessary and sufficient conditions for
the stability of the globally optimum equilibria in frustration-free
%
networks. To extend this analysis to networks with frustration--where the signed network structure inherently prevents the simultaneous minimization of all local interaction energies--we turn our attention to ensembles 
of random graphs, in which we derive the conditional moments of the spectrum associated with the system's linearized dynamics, which allows 
us to establish
%
that states corresponding to lower energy values are more likely to be stable. 
Our theoretical analysis and numerical experiments show that
introducing heterogeneity in 
the regularization parameters 
of the oscillators increases 
the spectral variance of 
the Hessian matrix at states corresponding to extreme Hamiltonian values. Thus, suitable heterogeneity 
increases the likelihood of 
stabilizing the global minimizer while
ensuring suboptimal solutions remain unstable.

\section{Preliminaries}

\vspace{-2.5ex}
\subsection{Notation}
Let $\real$, $\real^{N}$, and $\real^{N \times N}$ denote the set of 
real numbers, real vectors with $N$ elements, and real $N \times N$ matrices 
respectively. Let $\real^{N}_{> 0}$ denote 
the set of vectors with positive entries.  
Given a matrix $A \in \real^{N \times N}$, we denote 
its $(i,j)$-th entry by $A_{ij}$.
For a symmetric matrix $Q \in \real^{N \times N}$, 
we denote its smallest eigenvalue 
by $\lambdamin(Q)$, and we write~$Q \succeq 0$ (resp. $Q \succ 0$) 
if 
$Q$ is positive semidefinite (resp. positive definite). 
Let $S^{1} = \real / (2\pi \Z)$ denote the 
$1$-sphere, and $\Torus^{N} = (S^1) \times \cdots \times (S^1)$ the $N$-torus. 

\subsection{The Ising Model}
Here, we review the Ising model. 
Let $G$ be a graph 
with~$N$ nodes and a weighted adjacency 
matrix $J \in \real^{N \times N}$.
For every node $i$ in the network, we 
assign a \emph{spin} 
denoted by $\sigma_i \in \{-1, 1\}$. If $\sigma_i = 1$ (resp. $\sigma_i = -1$), we say that 
node~$i$ is \emph{spin up} (resp. \emph{spin down}). We call the vector of spins 
$\sigma = (\sigma_1, \dots, \sigma_N)\in \{-1, 1\}^N$ a \emph{spin configuration} of 
the network. 
The system energy is a function of the spin configuration, as given 
by the \emph{Ising Hamiltonian}
\begin{equation}
    \label{eq:ising_hamiltonian}
    \mathcal H(\sigma) = -\frac{1}{2}\sum_{i=1}^{N}\sum_{j=1}^{N}J_{ij}\sigma_i\sigma_j.
\end{equation}
Ising machines are physical devices designed to seek configurations in 
which 
the Ising Hamiltonian takes low values, and we are particularly 
interested in spin configurations that globally minimize~\eqref{eq:ising_hamiltonian}. 

The tendency of the spins of two nodes $i$ and $j$ to be aligned or 
antialigned 
depends on the sign of their connecting edge in the 
network. If $J_{ij} > 0$ (resp. $J_{ij} < 0$), the edge 
$(i, j)$  is called \emph{ferromagnetic} (resp. \emph{antiferromagnetic}) 
and the Ising Hamiltonian tends to take lower values when
$\sigma_i = \sigma_j$ (resp. $\sigma_i \neq \sigma_j$) 
since this when $-J_{ij}\sigma_i\sigma_j$, the \emph{local interaction energy} between $i$ 
and $j$, is minimized. An important 
property of many networks is \emph{frustration}, where for 
every spin configuration $\sigma \in \{-1, 1\}^N$ there is 
at least one ferromagnetic edge where $\sigma_i \neq \sigma_j$
or one antiferromagnetic edge where $\sigma_i = \sigma_j$. An example 
of a network that exhibits frustration is shown in Figure \ref{fig:frustration}. 
The presence of frustration in a network makes it difficult 
to find the minimizers of the Ising Hamiltonian, and has important implications for 
the stability analysis of the systems considered in this paper. 

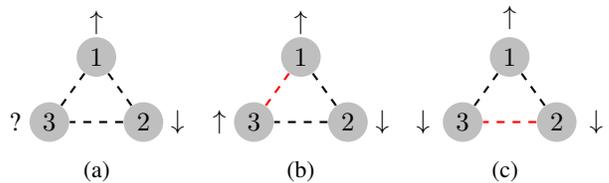
\begin{figure}
    \centering
    \begin{subfigure}{0.3\linewidth}
        \centering
        \begin{tikzpicture}[node distance=13pt]
            \tikzstyle{spinsite}=[circle,fill=black!25,minimum size=15pt,inner sep=0pt, node distance=60pt]
    
            \node (n1) at (0, 0) [spinsite] {$1$};
            \node (n2) at (1.25*0.5, -0.866) [spinsite] {$2$};
            \node (n3) at (-1.25*0.5, -0.866) [spinsite] {$3$};
    
            \draw[thick, dashed] (n1) -- (n2);
            \draw[thick, dashed] (n2) -- (n3);
            \draw[thick, dashed] (n3) -- (n1);
    
            \node[above of=n1] {$\uparrow$};
            \node[right of=n2] {$\downarrow$};
            \node[left of=n3] {?};
        \end{tikzpicture}
        \caption{}
    \end{subfigure}
    \begin{subfigure}{0.3\linewidth}
        \centering
        \begin{tikzpicture}[node distance=13pt]
            \tikzstyle{spinsite}=[circle,fill=black!25,minimum size=15pt,inner sep=0pt, node distance=60pt]
    
            \node (n1) at (0, 0) [spinsite] {$1$};
            \node (n2) at (1.25*0.5, -0.866) [spinsite] {$2$};
            \node (n3) at (-1.25*0.5, -0.866) [spinsite] {$3$};
    
            \draw[thick, dashed] (n1) -- (n2);
            \draw[thick, dashed] (n2) -- (n3);
            \draw[thick, dashed, color=red] (n3) -- (n1);
    
            \node[above of=n1] {$\uparrow$};
            \node[right of=n2] {$\downarrow$};
            \node[left of=n3] {$\uparrow$};
        \end{tikzpicture}
    \caption{}
    \end{subfigure}
    \begin{subfigure}{0.3\linewidth}
        \centering
        \begin{tikzpicture}[node distance=15pt]
            \tikzstyle{spinsite}=[circle,fill=black!25,minimum size=15pt,inner sep=0pt, node distance=60pt]
    
            \node (n1) at (0, 0) [spinsite] {$1$};
            \node (n2) at (1.25*0.5, -0.866) [spinsite] {$2$};
            \node (n3) at (-1.25*0.5, -0.866) [spinsite] {$3$};
    
            \draw[thick, dashed] (n1) -- (n2);
            \draw[thick, dashed, color=red] (n2) -- (n3);
            \draw[thick, dashed] (n3) -- (n1);
    
            \node[above of=n1] {$\uparrow$};
            \node[right of=n2] {$\downarrow$};
            \node[left of=n3] {$\downarrow$};
        \end{tikzpicture}
        \caption{}
    \end{subfigure}
   
    \caption{Example of a frustrated network with antiferromagnetic interactions between nodes (depicted by dashed lines), where for any spin configuration there exists a pair of adjacent nodes with aligned spins. (\textbf{a}) Triangular spin interaction network where spins 1 and 2 are antialigned, thereby minimizing their interaction energy. (\textbf{b}) State in which spin 3 is antialigned with spin 2 and aligned with spin 1; consequently not minimizing the local interaction energy $-J_{13}\sigma_1\sigma_3$. (\textbf{c}) Complementary state in which the local interaction energy between spins 3 and 2, $-J_{23}\sigma_2\sigma_3$,
    is not minimized. }
    \label{fig:frustration}
\end{figure}

\section{Problem Formulation}
Given a graph $G$ with adjacency matrix $J \in \real^{N \times N}$, 
we wish to identify a spin configuration $\sigma \in \{-1, 1\}^{N}$ 
that minimizes the Ising Hamiltonian of the network (i.e. solve $\min_{\sigma} \mathcal H(\sigma)$). 
To find the solution, we use the following continuous-time dynamical system known as an OIM:\begin{equation}
    \label{eq:oim-dynamics}
    \dot{\theta}_i = \sum_{j=1}^{N}J_{ij}\sin(\theta_j - \theta_i) - \mu_i \sin(2\theta_i). 
\end{equation}
Here, $\mu_i \geq 0$ is a tunable parameter. In typical formulations of OIMs,
the parameter $\mu_i$ takes the same value for 
every node (i.e, $\mu_i=\bar{\mu}$ for all $i$).
Here, we allow for heterogeneity 
in these parameters in order to improve convergence to 
global minimizers. 

This system converges to equilibrium points that correspond to spin configurations minimizing \eqref{eq:ising_hamiltonian}. Every spin configuration $\sigma \in \{-1, 1 \}^N$ 
can be associated to an equilibrium\footnote{We occasionally abuse terminology and use the 
term \emph{spin configuration} to refer both to $\sigma \in \{-1, 1 \}^N$ 
and the equilibrium $\theta^*(\sigma) \in \Torus^N$.} $\theta^*(\sigma) \in \Torus^N$:
\begin{equation}
\label{eq:oim-equilibria}
    \theta_i^*(\sigma) = \begin{aligned}
        \begin{cases} 0: \qquad &\sigma_i = 1, \\ 
        \pi: \qquad &\sigma_i = -1.
        \end{cases}
    \end{aligned}  
\end{equation}
The system \eqref{eq:oim-dynamics} may admit equilibria which do not 
have the form \eqref{eq:oim-equilibria}, however they 
are always unstable or structurally unstable \cite{YC-MKB-NS-ZL:24}, 
so we do not consider them here.

The dynamics~\eqref{eq:oim-dynamics} correspond to the gradient flow 
of the following energy function:
\begin{equation}
    \label{eq:energy}
    E(\theta; \mu) = -\frac{1}{2}\sum_{i=1}^{N}\sum_{j=1}^{N}J_{ij}\cos(\theta_i - \theta_j) + \sum_{i=1}^{N} \mu_i\sin(\theta_i)^2. 
\end{equation}
Note that $\Hc(\sigma) = E(\theta^*(\sigma); \mu) $ for 
all~$\sigma \in \{-1, 1 \}^N$. 
%
The system~\eqref{eq:oim-dynamics} is 
exactly the Kuramoto model on the graph $G$ with 
an additional 
\emph{subharmonic injection term}, $\mu_i \sin(2\theta_i)$. 
We refer to $\mu = (\mu_1,\ldots,\mu_N)$ as the \emph{regularization parameter} 
of \eqref{eq:oim-dynamics}, 
since increasing its value penalizes the energy function for $\theta \in \Torus^N$ 
such that $\theta_i \not\in \{0, \pi\}^N$, 
and ensures that the state converges to equilibria 
with the form~\eqref{eq:oim-equilibria}.

We wish to study the stability of spin configurations 
with respect to the dynamics of the OIM. Because~\eqref{eq:oim-dynamics}
is a gradient system, the stability of~$\theta^*(\sigma)$ 
can be inferred from the Hessian matrix of \eqref{eq:energy}, which is
\begin{equation*}
    \begin{aligned}
        \nabla^2_{\theta_i\theta_j}&E(\theta, \mu) = \\
        &\begin{cases}
            \sum_{k = 1}^{N} J_{ik} \cos(\theta_i - \theta_k) + 2\mu_i \cos(2\theta_i), &i = j, \\
            -J_{ij} \cos(\theta_i - \theta_j), &i \neq j.
        \end{cases}
    \end{aligned}
\end{equation*}
Let $\Hess(\sigma, \mu) = \nabla^2_{\theta\theta}E(\theta^*(\sigma), \mu) \in \real^{N \times N}$ denote the Hessian matrix 
evaluated at the spin configuration $\sigma$. Then,
\begin{equation}
    \label{eq:hessian-at-sigma}
    \Hess_{ij}(\sigma, \mu) = 
    \begin{cases}
        \sum_{k = 1}^{N} J_{ik}\sigma_i\sigma_k + 2\mu_i  & i = j, \\
        -J_{ij}\sigma_i\sigma_j, & i \neq j.
    \end{cases}
\end{equation}
The equilibrium point $\theta^*(\sigma)$  of the dynamical system \eqref{eq:oim-dynamics} is asymptotically stable if $\lambdamin(\Hess(\sigma, \mu)) > 0$. 

Even though \eqref{eq:oim-dynamics} is a gradient system, understanding 
the relationship between global minimizers of the Ising Hamiltonian 
and the stability of the corresponding equilibria is a delicate issue. 
In fact, \emph{$\sigma^*$ being a global optimizer of \eqref{eq:ising_hamiltonian} is neither necessary nor sufficient 
for stability of the corresponding equilibrium $\theta^*(\sigma^*)$}, 
as we show in the following example. 

\begin{example}
\label{ex:minimum-is-not-stable}
Consider a network with $N=3$ nodes:
\[ J = \begin{bmatrix} 0 & -1 & -1 \\ -1 & 0 & -1 \\ -1 & -1 & 0 \end{bmatrix}. \]
The corresponding Ising Hamiltonian has the following set of global minimizers: $\Sigma^* = \{ (1, 1, -1), (1, -1, 1),(1, -1, -1), $ $(-1, 1, 1), (-1, 1, -1), (-1, -1, 1)\}$.
If $\mu_i = 0.1$ for all $i$, it follows that $\lambdamin(\Hess(\sigma^*, \mu)) < 0$ for every $\sigma^* \in \Sigma^*$. Thus, for every minimizer of~\eqref{eq:ising_hamiltonian}, the corresponding 
equilibrium is unstable.
However, if $\mu_i = 10$ for all $i$, then $\lambdamin(\Hess(\sigma, \mu)) > 0$ for all 
$\sigma \in \{-1, 1 \}^N$, implying that the equilibrium corresponding to a spin configuration is asymptotically stable regardless of 
whether it is a minimizer of~\eqref{eq:ising_hamiltonian} or not. 
\QEDA
\end{example}


This example motivates several challenges related to the stability 
analysis of the network system \eqref{eq:oim-dynamics}. First, the
relationship between the stability of 
a spin configuration~$\theta^*(\sigma)$ and the corresponding value of the 
Ising Hamiltonian $\Hc(\sigma)$ is not straightforward. For instance, 
although trajectories of system 
\eqref{eq:oim-dynamics} tend to reduce the value of the energy 
function $E(\theta; \mu)$, this does not mean they converge 
to global minimizers of the Ising
Hamiltonian. 
Second, the stability of a spin configuration 
is sensitive to the choice of the regularization parameter $\mu$. If $\mu$
is too small, the OIM fails to stabilize any of the globally optimal 
spin configurations. On the other hand, if $\mu$ is too large, the OIM
also stabilizes spin configurations that are not optimal. Thus, in order to design
OIMs that are practically useful for solving combinatorial optimization problems,
we need to develop strategies for choosing the regularization parameter so that the trajectories converge to globally optimal solutions with high likelihood. 
We approach both challenges, as summarized next. 

\begin{problem}
\label{prob:main-problem}
Given the dynamical system \eqref{eq:oim-dynamics}:
\begin{enumerate}
    \item Identify the relationship between the stability of a configuration  
    $\sigma \in \{-1, 1 \}^N$,
    the Ising Hamiltonian $\Hc(\sigma)$, and the regularization parameter $\mu$. 
    \item Quantify the impact of heterogeneous choices of regularization parameters
    on the convergence of OIMs to global minimizers. 
\end{enumerate}
\end{problem}

%
%

\section{Signed Graphs of the Ising Model}
\label{sec.signedgraph}
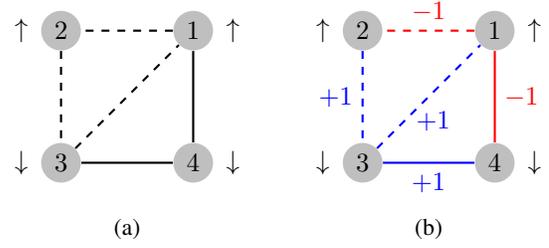
\begin{figure}
    \centering
    \begin{subfigure}{0.45\linewidth}
        \centering
        \begin{tikzpicture}[node distance=15pt]
            \tikzstyle{spinsite}=[circle,fill=black!25,minimum size=15pt,inner sep=0pt, node distance=50pt]
    
            \node (n1) at (0, 0) [spinsite] {$1$};
            \node[spinsite, left of=n1] (n2) {$2$};
            \node[spinsite, below of=n2] (n3) {$3$};
            \node[spinsite, right of=n3] (n4) {$4$};
    
            \draw[thick, dashed] (n1) -- (n2) node[midway, above] {\phantom{$+1$}};
            \draw[thick, dashed] (n2) -- (n3) node[midway, left] {\phantom{$+1$}};
            \draw[thick, dashed] (n3) -- (n1) node[midway, below] {\phantom{$+1$}};
            \draw[thick] (n1) -- (n4) node[midway, right] {\phantom{$+1$}};
            \draw[thick] (n3) -- (n4) node[midway, below] {\phantom{$+1$}};
    
            \node[right of=n1] {$\uparrow$};
            \node[left of=n2] {$\uparrow$};
            \node[left of=n3] {$\downarrow$};
            \node[right of=n4] {$\downarrow$};
        \end{tikzpicture}
        \caption{}
    \end{subfigure}
    \begin{subfigure}{0.45\linewidth}
        \centering
        \begin{tikzpicture}[node distance=15pt]
            \tikzstyle{spinsite}=[circle,fill=black!25,minimum size=15pt,inner sep=0pt, node distance=50pt]
    
            \node (n1) at (0, 0) [spinsite] {$1$};
            \node[spinsite, left of=n1] (n2) {$2$};
            \node[spinsite, below of=n2] (n3) {$3$};
            \node[spinsite, right of=n3] (n4) {$4$};
    
            \draw[thick, dashed, color=red] (n1) -- (n2) node[midway, above] {$-1$};
            \draw[thick, dashed, color=blue] (n2) -- (n3) node[midway, left] {$+1$};
            \draw[thick, dashed, color=blue] (n3) -- (n1) node[midway, below=0.4ex, xshift=0.4ex] {$+1$};
            \draw[thick, color=red] (n1) -- (n4) node[midway, right] {$-1$};
            \draw[thick, color=blue] (n3) -- (n4) node[midway, below] {$+1$};
    
            \node[right of=n1] {$\uparrow$};
            \node[left of=n2] {$\uparrow$};
            \node[left of=n3] {$\downarrow$};
            \node[right of=n4] {$\downarrow$};
        \end{tikzpicture}
        \caption{}
    \end{subfigure}
    \caption{Signed graph construction. (\textbf{a}) Graph of the Ising model and corresponding spin configuration $\sigma$, where ferromagnetic and antiferromagnectic edges are denoted by solid and dashed lines, respectively. (\textbf{b}) Signed graph $G_{\rm s}(\sigma)$, where blue and red lines denote positive and negative edges, respectively.}
    \label{fig:signed-graph}
\end{figure}

In this section, we formulate the problem of analyzing the 
stability of \eqref{eq:oim-dynamics} 
in graph-theoretic language. To this end, we construct a signed graph 
for each spin configuration and relate the Ising Hamiltonian 
and the stability properties of the OIM to the 
spectral properties of the graph Laplacian. 

For each spin configuration $\sigma$, we define the signed graph~$G_{\rm s}(\sigma)$ 
whose signed adjacency matrix is 
given by 
\begin{equation}
\label{eq:signed-adjacency}
    A_{ij}(\sigma) = J_{ij}\sigma_i\sigma_j.
\end{equation}
Following our definition, the edge $(i, j)$ of $G_{\rm s}(\sigma)$ is
positive if either $\sigma_i = \sigma_j$ and $(i, j)$ is ferromagnetic,
or if $\sigma_i \neq \sigma_j$ and~$(i, j)$ is antiferromagnetic, i.e. the local 
interaction energy $-J_{ij}\sigma_i\sigma_j$ between the nodes is minimized. Otherwise, 
the edge is negative. 
The construction of the signed graph is depicted in Figure~\ref{fig:signed-graph}. 
We say that the OIM~\eqref{eq:oim-dynamics} is
\emph{frustrated} if 
$G_{\rm s}(\sigma)$ has at least one negative edge for
every spin configuration $\sigma$, 
otherwise the OIM is \emph{frustration-free}.

The signed graph $G_{\rm s}(\sigma)$ is \emph{structurally balanced} if 
there exists a partition of the set of nodes $\mathcal V=\{1, 2, \dots, N \}$ 
into two sets, $\Vc_1$ and $\Vc_2$,
such that edges within $\Vc_1$ or $\Vc_2$ are positive, and 
edges between $\Vc_1$ and $\Vc_2$ are negative. 
The Laplacian matrix of the graph~$G_{\rm s}(\sigma)$ is $L(\sigma) = D(\sigma) - A(
\sigma)$, where $D(\sigma)=\operatorname{diag}(d_1(\sigma),\ldots,d_N(\sigma))$
is the diagonal matrix formed by the node degrees $d_i(\sigma) = \sum_k A_{ik}(\sigma)$. Unlike the case of unsigned 
graphs, the Laplacian matrix of a signed graph
is not necessarily positive semidefinite.

 By construction, the Laplacian matrix is given by 
 \begin{equation}
     \label{eq:graph-laplacian}
     L_{ij}(\sigma) = \begin{cases}
     \sum_{k=1}^{N}J_{ik}\sigma_i\sigma_k \qquad &i=j, \\
     -J_{ij}\sigma_i\sigma_j \qquad &i\neq j.
 \end{cases}
\end{equation}
It follows that the Ising Hamiltonian is equivalent to
%
\begin{equation}
    \label{eq:hamiltonian-laplacian}
    \begin{aligned}
            \Hc(\sigma) = -\frac{1}{2}\sum_{i=1}^{N}L_{ii}(\sigma) =
    -\frac{1}{2}\trace(L(\sigma)),
    \end{aligned}
\end{equation}
\noindent
and, comparing \eqref{eq:hessian-at-sigma} and \eqref{eq:graph-laplacian}, we see that the Hessian matrix of the energy function is 
\begin{equation}
    \label{eq:hessian-laplacian}
    \Hess(\sigma, \mu) = L(\sigma) + 2M,
\end{equation}
\noindent
where $M = \operatorname{diag}(\mu_1, \dots, \mu_N)$ is a diagonal matrix formed by the regularization parameters $\mu_i$. 
%
Using this signed graph construction, we establish the following stability 
result for frustration-free OIMs. 
\begin{theorem}\longthmtitle{Stability of frustration-free OIMs}
    \label{thm:stability-without-frustration}
    Assume that the OIM is frustration-free.
    Then,
    \begin{enumerate}
        \item If $\sigma^*$ is a global minimizer of~\eqref{eq:ising_hamiltonian}, then 
        $\theta^*(\sigma^*)$ is 
        asymptotically stable for 
        any $\mu \in \real^N_{>0}$.
        \item There exists $\mu^* > 0$ such that for all $\mu \in \real^{N}_{>0}$ where $0 < \mu_i < \mu^*$ for all $i$, any spin configuration 
        $\sigma$ that is not a global minimizer of~\eqref{eq:ising_hamiltonian} is unstable. 
    \end{enumerate} 
\end{theorem}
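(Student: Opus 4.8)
The plan is to convert both claims into spectral statements about the signed Laplacian via the identity $\Hess(\sigma,\mu) = L(\sigma) + 2M$, and to use the frustration-free hypothesis through a characterization of the minimizers. Since $\Hc(\sigma) = -\tfrac12\sum_{i,j}J_{ij}\sigma_i\sigma_j$ and each summand obeys $J_{ij}\sigma_i\sigma_j \le |J_{ij}|$, we have $\Hc(\sigma) \ge -\tfrac12\sum_{i,j}|J_{ij}|$, and a frustration-free configuration attains this bound. I would first record the resulting lemma: $\sigma^*$ is a global minimizer of \eqref{eq:ising_hamiltonian} precisely when every local interaction energy is minimized, i.e. $A_{ij}(\sigma^*) = J_{ij}\sigma^*_i\sigma^*_j = |J_{ij}| \ge 0$ for all $i,j$, and moreover $\sum_{i,j} A_{ij}(\sigma^*) = -2\Hc(\sigma^*)$.

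For claim~(i), once $A(\sigma^*)$ is entrywise nonnegative the matrix $L(\sigma^*)$ is an ordinary nonnegatively weighted graph Laplacian, so $x^\top L(\sigma^*)x = \tfrac12\sum_{i,j}A_{ij}(\sigma^*)(x_i-x_j)^2 \ge 0$, giving $L(\sigma^*)\succeq 0$. Then for any $\mu \in \real^N_{>0}$ we have $\Hess(\sigma^*,\mu) = L(\sigma^*) + 2M \succeq 2M \succ 0$, hence $\lambdamin(\Hess(\sigma^*,\mu)) \ge 2\min_i \mu_i > 0$ and $\theta^*(\sigma^*)$ is asymptotically stable.

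For claim~(ii), the key idea is a single test vector certifying a negative Rayleigh quotient of $L(\sigma)$ whenever $\sigma$ is suboptimal. Fix a global minimizer $\sigma^*$ and, given any $\sigma$, set $s = \sigma \odot \sigma^*$, so $s_i = \sigma_i\sigma^*_i \in \{-1,+1\}$ and $\Vert s\Vert^2 = N$. The gauge relation $A_{ij}(\sigma) = s_i s_j A_{ij}(\sigma^*)$, together with $\trace L(\sigma) = -2\Hc(\sigma)$ and $\sum_{i,j}A_{ij}(\sigma^*) = -2\Hc(\sigma^*)$, collapses the Rayleigh quotient to
\[
s^\top L(\sigma)\, s = s^\top D(\sigma)\, s - s^\top A(\sigma)\, s = 2\big(\Hc(\sigma^*) - \Hc(\sigma)\big),
\]
so that $s^\top \Hess(\sigma,\mu)\, s = 2(\Hc(\sigma^*) - \Hc(\sigma)) + 2\sum_i \mu_i$ since $s_i^2 = 1$. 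Setting $\delta = \min\{\Hc(\sigma) - \Hc(\sigma^*) : \sigma \text{ not a global minimizer}\} > 0$, which is positive because the configuration set is finite, and choosing $\mu^* = \delta/N$, any $\mu$ with $0 < \mu_i < \mu^*$ yields $\sum_i \mu_i < \delta \le \Hc(\sigma) - \Hc(\sigma^*)$; hence $s^\top \Hess(\sigma,\mu)\, s < 0$, so $\lambdamin(\Hess(\sigma,\mu)) < 0$ and $\theta^*(\sigma)$ is unstable.

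The computational heart, and the step I expect to require the most care, is recognizing that the signed adjacencies of two configurations are gauge-related by $S = \diag(s)$, which makes the frustrated energy reduce to the Hamiltonian gap $\Hc(\sigma)-\Hc(\sigma^*)$; the rest is bookkeeping with the trace identity. The frustration-free hypothesis must be invoked precisely in~(i): without it a global minimizer need not have an all-positive signed graph (as Example~\ref{ex:minimum-is-not-stable} shows), and the positive semidefiniteness of $L(\sigma^*)$ can fail. I would also verify the gradient-flow sign convention once more so that a negative Hessian eigenvalue indeed produces an unstable mode of the linearization.
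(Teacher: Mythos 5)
Your proof is correct. Part (i) coincides with the paper's argument: both use the frustration-free hypothesis to show that a global minimizer has an all-positive signed graph, hence $L(\sigma^*)\succeq 0$ and $\Hess(\sigma^*,\mu)=L(\sigma^*)+2M\succ 0$. Part (ii), however, takes a genuinely different route. The paper partitions the nodes according to agreement with a global minimizer $\sigma^*$, shows that $G_{\rm s}(\sigma)$ is structurally balanced with at least one negative edge, and invokes an external result (Zelazo--B\"urger, Corollary IV.4) to conclude that $L(\sigma)$ is indefinite; its threshold is then $\mu^* = \min_\sigma\big(-\lambdamin(L(\sigma))\big)$ over suboptimal $\sigma$. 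You instead exhibit the explicit test vector $s=\sigma\odot\sigma^*$ and use the gauge identity $A_{ij}(\sigma)=s_is_jA_{ij}(\sigma^*)$ to collapse the Rayleigh quotient to $s^\top L(\sigma)s=2\big(\Hc(\sigma^*)-\Hc(\sigma)\big)$, giving the fully explicit threshold $\mu^*=\delta/N$ in terms of the Hamiltonian gap $\delta$. Your route buys three things: it is self-contained (no citation to signed-graph spectral theory); the threshold is computable from Hamiltonian values alone, with no eigenvalue computations; and---since your Rayleigh-quotient computation never uses frustration-freeness---it proves instability of suboptimal configurations for \emph{arbitrary}, possibly frustrated, OIMs, which is strictly more general than the theorem as stated. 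Indeed, your closing remark is sharper than you may realize: the paper's structural-balance argument genuinely needs frustration-freeness (positivity of edges within and between the partition blocks comes from the all-positive property of $G_{\rm s}(\sigma^*)$), whereas your part (ii) does not. What the paper's route buys in exchange is a less conservative threshold: since $\lambdamin(L(\sigma))\le s^\top L(\sigma)s/N$, the eigenvalue-based $\mu^*$ is at least as large as your $\delta/N$. A minor incidental benefit of your explicit computation is that it tracks the factor $2$ in $\Hess = L+2M$ correctly, which the paper's final display in (ii) drops (strictly, the paper's $\mu^*$ should be halved).
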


\begin{proof}
    We begin by showing that if the Ising model is frustration-free, 
    $\sigma^*$ is a global minimizer if and only if every edge in $G_{\rm s}(\sigma^*)$ is 
    positive. This is because if $\sigma^* \in \{-1, 1 \}^N$ 
    is such that every edge of $G_{\rm s}(\sigma^*)$
    is positive, then for all edges~$(i,j)$, 
    we have $J_{ij}\sigma^*_i\sigma^*_j = |J_{ij}|$. 
    Thus the Ising Hamiltonian attains 
    the minimum possible value,
    \[ \Hc(\sigma^*) = -\frac{1}{2}\sum_{i=1}^{N}\sum_{j=1}^{N}\abs{J_{ij}}, \]
    and $\sigma^*$ is a global minimizer. On the other 
    hand, if $\sigma$ is such that $G_{\rm s}(\sigma)$ has a negative edge $(i, j)$, then $J_{ij}\sigma_i\sigma_j < 0<J_{ij}\sigma^*_i\sigma^*_j$, so $\Hc(\sigma) > \Hc(\sigma^*)$ and $\sigma$ is not a global minimizer. 
    
    Next, to prove (i), note that if $\sigma^*$ is a global minimizer, then every edge of $G_{\rm s}(\sigma^*)$ is 
    positive, so $L(\sigma^*) \succeq 0$. Thus
    \[ \Hess(\sigma^*, \mu) = L(\sigma^*) + 2\diag(\mu) \succ 0, \]
    and $\theta^*(\sigma^*)$ is asymptotically stable. 

    To show (ii), let $\sigma \in \{-1, 1 \}^N$ be a spin configuration which is not a global minimizer. 
    We let $\sigma^* \in \{-1, 1 \}^N$ be a global minimizer, 
    and partition the set of the nodes of $G_{\rm s}(\sigma)$ into sets $\Vc_1 = \{ 1 \leq i \leq N | \sigma_i = \sigma_i^* \}$ and $\Vc_2 = \{ 1 \leq i \leq N | \sigma_i \neq \sigma_i^* \}$.
    If $(i, j)$
    is an edge where $i, j \in \Vc_1$, then $J_{ij}\sigma_i\sigma_j = J_{ij}\sigma^*_i\sigma^*_j > 0$, and similarly if $i, j \in \Vc_2$, then $J_{ij}\sigma_i\sigma_j = J_{ij}(-\sigma^*_i)(-\sigma^*_j) > 0$. Otherwise, if $i \in \Vc_1$ and $j \in \Vc_2$, then $J_{ij}\sigma_i\sigma_j = J_{ij}\sigma^*_i(-\sigma^*_j) < 0$. 
    Thus 
    $G_{\rm s}(\sigma)$ is structurally balanced, and since $\sigma$ is 
    not a global minimizer, $G_{\rm s}(\sigma)$ 
    has at least one negative edge. By \cite[Corollary IV.4]{DZ-MB:15}, $L(\sigma)$ is indefinite so $\lambdamin(L(\sigma)) < 0$. If 
    we let
    \[ \mu^* = \min \big\{ -\lambdamin(L(\sigma))\;|\; \sigma \in \{-1, 1 \}^N \text{ is suboptimal}  \big\},  \]
    then for any $\mu \in \real^{N}_{>0}$ where $\mu_i < \mu^*$ 
    for all $i$,  we have 
    \[ \begin{aligned}
        \lambdamin(\Hess(\sigma, \mu)) &= \lambdamin\big(L(\sigma) + \diag(\mu_1, \dots, \mu_N)\big) < 0,
    \end{aligned}  \]
    which implies that $\theta^*(\sigma)$ is unstable. 
\end{proof}

As a consequence of 
Theorem~\ref{thm:stability-without-frustration}, 
it is possible to choose 
regularization parameters
in frustration-free OIMs to guarantee convergence 
to spin configurations corresponding to global 
minimizers of the Ising Hamiltonian, and the proof 
provides a precise bound on the value of $\mu$ 
to achieve this. For large networks,
the OIM \eqref{eq:oim-dynamics} is almost 
always frustrated, and the picture is considerably 
more complicated. Since signed graphs corresponding 
to global minimizers may contain negative edges, 
there may be spin configurations for which the associated
signed graph is unbalanced. 
As a result, estimating $\lambdamin(L(\sigma))$ 
is generally difficult, which in turn makes it challenging to 
properly tune $\mu$ in order
to stabilize global minimimzers while leaving suboptimal 
spin configurations unstable. However, as we show in 
the sequel, by turning to statistical analysis we can 
obtain approximate guarantees with high likelihood for 
ensembes of random graphs. 



\section{Ensembles of Networks}
In this section, we use the signed graph formalism 
introduced in Section~\ref{sec.signedgraph} to characterize how each spin configuration $\sigma$ relates both to the Ising energy $\mathcal H(\sigma)$ and the stability of OIMs that are possibly frustrated. We 
focus specifically on networks with only antiferromagnetic edges. 
Our approach relies on statistically characterizing the relationship between these quantities using an ensemble of random signed graphs, which allows us to compute the \emph{conditional expectation} and \emph{conditional variance} of the eigenvalues of the Hessian matrix of spin configurations $\sigma$ at values 
of the Hamiltonian energy where $\mathcal H(\sigma) = h$. 

We begin by specifying an ensemble, $\mathcal{G}_N(p_1, p_2)$, of randomly-generated graphs $G_{\rm s}(\sigma)$, where $N$ is the number of nodes 
in the graph. The matrix $J$ is randomly constructed following 
the Erdős–Rényi (ER) model, where an antiferromagnetic edge $(i, j)$ is added 
with probability $p_1$, and the spin configuration~$\sigma$ is uniformly sampled with probability~$p_2$. 
Thus, for every $i\neq j$,
\begin{equation}
J_{ij} =
\begin{cases}
0, & \text{w. probability } (1 - p_1), \\
-1, & \text{w. probability } p_1,
\end{cases}
\end{equation}
and
\begin{equation}
\sigma_i =
\begin{cases}
+1, & \text{w. probability } p_2, \\
-1, & \text{w. probability } (1 - p_2).
\end{cases}
\end{equation}
Following~\eqref{eq:signed-adjacency},
the signed graph $G_{\rm s}(\sigma)$ drawn from ensemble $\mathcal G_N(p_1, p_2)$ is given by
\[
\begin{aligned}
A_{ij}(\sigma) 
&= \begin{cases}
0, & \text{w. probability } (1 - p_1), \\
-1, & \text{w. probability } p_1p_2^2 + p_1(1-p_2)^2, \\
+1, & \text{w. probability } 2p_1p_2(1-p_2). 
\end{cases}
\end{aligned}
\]

\noindent
%
The entries of the signed adjacency matrix $A(\sigma)$ are discrete random variables, and the $k$-th moment of 
each entry can be calculated as follows.  
%
%
\begin{lemma}\longthmtitle{Moments of signed adjacency matrix}
    \label{lem:adjacency-moments}
    Let $G_{\rm s}(\sigma)$ be a graph sampled from the 
    ensemble $\Gc_N(p_1, p_2)$, and 
    $A(\sigma)$ be its adjacency matrix. Then, the $k$th 
    moment is
    \begin{equation}
        \label{eq:adjacency-moments}
        \mathbb{E}[A^k_{ij}(\sigma)] = \begin{cases}
        -p_1(2p_2 - 1)^2 \qquad &\text{$k$ odd} \\ 
        p_1 \qquad &\text{$k$ even}
    \end{cases} 
    \end{equation}
\end{lemma}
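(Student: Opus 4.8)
The plan is to exploit the fact that each entry $A_{ij}(\sigma)$ takes values only in the three-element set $\{-1, 0, +1\}$, so that $A_{ij}^k(\sigma)$ depends on $k$ only through its parity. Raising $-1$, $0$, and $+1$ to the $k$th power yields $-1$, $0$, $+1$ when $k$ is odd and $+1$, $0$, $+1$ when $k$ is even. Consequently $A_{ij}^k(\sigma) = A_{ij}(\sigma)$ for odd $k$ and $A_{ij}^k(\sigma) = A_{ij}^2(\sigma)$ for even $k$, so it suffices to compute only the first and second moments of a single entry.

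For the odd case, I would directly evaluate the first moment using the ensemble probabilities displayed just before the lemma:
\[
\mathbb{E}[A_{ij}(\sigma)] = (-1)\big(p_1 p_2^2 + p_1 (1-p_2)^2\big) + (+1)\big(2 p_1 p_2 (1-p_2)\big).
\]
Factoring out $-p_1$ collapses the bracketed terms into the perfect square $\big(p_2 - (1-p_2)\big)^2 = (2p_2 - 1)^2$, giving $\mathbb{E}[A_{ij}(\sigma)] = -p_1(2p_2 -1)^2$. For the even case, the second moment is simply the probability that the entry is nonzero,
\[
\mathbb{E}[A_{ij}^2(\sigma)] = p_1 p_2^2 + p_1(1-p_2)^2 + 2 p_1 p_2 (1-p_2) = p_1\big(p_2 + (1-p_2)\big)^2 = p_1,
\]
where the three probabilities again assemble into a perfect square, this time evaluating to $1$.

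There is no genuine obstacle here: the result is an immediate moment computation once the parity reduction is observed. The only point requiring care is confirming that the entry probabilities listed before the lemma are correctly derived from $A_{ij}(\sigma) = J_{ij}\sigma_i\sigma_j$, i.e. that the entry is negative precisely when $J_{ij} = -1$ and the spin pair contributes an energy-increasing sign, accounting for all four spin-pair combinations weighted by $p_2$, so that these collect as $p_1 p_2^2 + p_1(1-p_2)^2$ for the value $-1$ and $2p_1 p_2(1-p_2)$ for the value $+1$. Since those probabilities are supplied in the preceding display, the lemma follows at once from the two algebraic identities above.
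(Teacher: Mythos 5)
Your proof is correct and follows essentially the same route as the paper: both exploit that $A_{ij}(\sigma) \in \{-1, 0, +1\}$ so the $k$th moment depends only on the parity of $k$, and both reduce to the same perfect-square simplifications (the paper just writes this compactly as $\mathbb{E}[A_{ij}^k] = (-1)^k\big(p_1p_2^2 + p_1(1-p_2)^2\big) + 2p_1p_2(1-p_2)$). Your additional check that the displayed entry probabilities follow from $A_{ij}(\sigma) = J_{ij}\sigma_i\sigma_j$ is a nice touch of rigor but does not change the argument.
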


\begin{proof}
    Observe that, for any $k \in \N$,
    \[ \mathbb{E}[A_{ij}^k] = (-1)^k (p_1p_2^2 + p_1(1-p_2)^2) + 2p_1p_2(1-p_2), \]
    which simplifies to \eqref{eq:adjacency-moments}. 
\end{proof}


\subsection{Moments of the Ising Hamiltonian}
\label{sec.moments.hamiltonian}
We now calculate the expected value and the variance 
of the Ising Hamiltonian
for a signed graph sampled from the ensemble $\mathcal G_N(p_1, p_2)$. 

\begin{lemma}\longthmtitle{Moments of Ising Hamiltonian}
    \label{lem:hamiltonian-moments}
    The 
    expected value of the Ising Hamiltonian 
    for graphs $G_{\rm s}(\sigma)$ sampled 
    from the ensemble $\Gc_N(p_1, p_2)$
    is 
    \[ \mathbb{E}[\Hc(\sigma)] = -\binom{N}{2}\mathbb{E}[A_{ij}(\sigma)] \]
    and its variance is
    \begin{align}
    \notag
        \operatorname{Var}&[\Hc(\sigma)] 
        = \binom{N}{2}\mathbb{E}[A_{ij}(\sigma)^2] + 6\binom{N}{3}\mathbb{E}[J_{ij}]^2\mathbb{E}[\sigma_i]^2 \\
        \label{eq:ising-variance}
        &+ 6\binom{N}{4}\mathbb{E}[J_{ij}]^2\mathbb{E}[\sigma_i]^4 - \binom{N}{2}^2\mathbb{E}[A_{ij}(\sigma)]^2.
    \end{align}
    \noindent
    In the special case where $p_2=\frac{1}{2}$, we have that $\mathbb{E}[\mathcal H(\sigma)] = 0$ and 
        $\operatorname{Var}[\mathcal H(\sigma)] = \frac{1}{2}N(N-1)p_1$.
\end{lemma}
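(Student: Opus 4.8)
The plan is to reduce both moments of $\Hc(\sigma)$ to the single-entry moments already supplied by Lemma~\ref{lem:adjacency-moments}. Since $J$ has zero diagonal, $A_{ii}(\sigma)=0$, and since $A(\sigma)$ is symmetric, the Hamiltonian \eqref{eq:ising_hamiltonian} collapses to a sum over unordered pairs, $\Hc(\sigma) = -\frac{1}{2}\sum_{i,j}A_{ij}(\sigma) = -\sum_{i<j}A_{ij}(\sigma)$. The entries $A_{ij}(\sigma)$ are identically distributed, so linearity of expectation over the $\binom{N}{2}$ pairs immediately gives $\mathbb{E}[\Hc(\sigma)] = -\binom{N}{2}\mathbb{E}[A_{ij}(\sigma)]$, establishing the first formula.

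For the variance I would compute $\mathbb{E}[\Hc(\sigma)^2]$ by expanding the square into a double sum $\sum_{i<j}\sum_{k<l}\mathbb{E}[A_{ij}(\sigma)A_{kl}(\sigma)]$ and partitioning the pairs of edges $\{i,j\},\{k,l\}$ according to how many vertices they share. Writing $A_{ij}(\sigma)=J_{ij}\sigma_i\sigma_j$ as in \eqref{eq:signed-adjacency} and using that the $J_{ij}$ are independent across distinct edges while the $\sigma_i$ are independent across distinct nodes, there are three cases. When the two edges coincide ($\{i,j\}=\{k,l\}$) the term is $\mathbb{E}[A_{ij}(\sigma)^2]$. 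When they share exactly one vertex, say $j=k$, the shared spin squares to one ($\sigma_j^2=1$) and the expectation factorizes as $\mathbb{E}[J_{ij}]\mathbb{E}[J_{jl}]\mathbb{E}[\sigma_i]\mathbb{E}[\sigma_l]=\mathbb{E}[J_{ij}]^2\mathbb{E}[\sigma_i]^2$. When the edges are disjoint all four spins are distinct and the expectation factorizes fully as $\mathbb{E}[J_{ij}]^2\mathbb{E}[\sigma_i]^4$.

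The remaining work is combinatorial bookkeeping of how many ordered pairs of edges fall in each class: $\binom{N}{2}$ coincident pairs, $6\binom{N}{3}$ pairs sharing one vertex (choose a triple of vertices, then one of the three ways to pick a shared vertex, counted in both orders), and $6\binom{N}{4}$ disjoint pairs (choose four vertices, then one of the three perfect matchings, again ordered). Summing these weighted by the per-class expectations above and subtracting $\mathbb{E}[\Hc(\sigma)]^2 = \binom{N}{2}^2\mathbb{E}[A_{ij}(\sigma)]^2$ yields \eqref{eq:ising-variance}. Finally, setting $p_2=\tfrac{1}{2}$ makes $\mathbb{E}[\sigma_i]=2p_2-1=0$, which annihilates the two cross terms and the subtracted mean term, leaving $\operatorname{Var}[\Hc(\sigma)]=\binom{N}{2}\mathbb{E}[A_{ij}(\sigma)^2]=\tfrac{1}{2}N(N-1)p_1$ by the even-$k$ case of Lemma~\ref{lem:adjacency-moments}.

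I expect the main obstacle to be the combinatorial counting in the one-shared-vertex and disjoint cases: one must count ordered edge pairs consistently with the $\sum_{i<j}\sum_{k<l}$ indexing and verify that the factor of $6$ in each arises as the product of the vertex-subset count with the number of internal matchings or shared-vertex choices, together with the ordering. The probabilistic factorizations themselves are routine given the independence of the $J$ and $\sigma$ variables and the identity $\sigma_i^2=1$.
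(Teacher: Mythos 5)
Your proposal is correct and follows essentially the same route as the paper's own proof: linearity of expectation over the $\binom{N}{2}$ identically distributed edge terms for the mean, then an expansion of $\mathbb{E}[\Hc(\sigma)^2]$ partitioned into coincident, one-shared-vertex, and disjoint edge pairs with the same independence factorizations (using $\sigma_j^2=1$) and the same counts $\binom{N}{2}$, $6\binom{N}{3}$, and $6\binom{N}{4}$. Your explicit justification of the factors of $6$ (vertex-subset choice times shared-vertex/matching choice times ordering) is slightly more detailed than the paper's, but the argument is identical in substance.
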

\vspace{1.0ex}

\begin{proof}
    We begin by computing the expectation 
    of the Ising Hamiltonian. 
    Observe that 
    \[ \begin{aligned}
         \mathbb{E}[\Hc(\sigma)] &= -\frac{1}{2}\sum_{i=1}^{N}\sum_{j=1}^{N}\mathbb{E}[J_{ij}\sigma_i\sigma_j] = -\binom{N}{2}\mathbb{E}[A_{ij}(\sigma)].
    \end{aligned} \]
    where the first equality in the previous expression follows by linearity of the expectation operator, and the second 
    equality follows by the fact that 
    there are $\binom{N}{2}$ possible edges in an~$N$
    node network. 

    To compute the variance of the Ising Hamiltonian, we begin by computing its second 
    moment. Observe that
    \begin{equation}
        \label{eq:ising-second-moment}
        \begin{aligned}
            \mathbb{E}&[\Hc(\sigma)^2] = \mathbb{E}\bigg[ \Big( \sum_{(i, j) \in \Ec} A_{ij}\sigma_i\sigma_j  \Big)^2 \bigg] \\
            &=\mathbb{E}\bigg[\sum_{(i, j) \in \Ec} (A_{ij}(\sigma))^2 + \sum_{(i, j) \neq (k, \ell)}A_{ij}(\sigma)A_{k\ell}(\sigma) \bigg],
        \end{aligned}
    \end{equation}
    where $\Ec = \{(i, j) | 1 \leq i\leq N,\, j>i \}$ is the set of all possible edges in an $N$ node network. The first term in~\eqref{eq:ising-second-moment}~is
    \begin{equation}
        \label{eq:sum-of-squares}
        \mathbb{E}\bigg[\sum_{(i, j) \in \Ec} (A_{ij}(\sigma))^2 \bigg] = \binom{N}{2}\mathbb{E}[A_{ij}(\sigma)^2]. 
    \end{equation}
    To compute the second term we need to sum over pairs of distinct edges $(i, j)$ and  $(k, \ell)$. There are two possible cases:
    \begin{enumerate}[leftmargin=15pt]
        \item \emph{Connected edges:} The first case is
        where $(i, j)$ and~$(k, \ell)$ share a common node, i.e.~$j = \ell$. Then, 
        \[ \begin{aligned}
             \mathbb{E}[A_{ij}(\sigma)A_{k\ell}(\sigma)] &= \mathbb{E}[J_{ij}J_{kj}\sigma_i\sigma_k] = \mathbb{E}[J_{ij}]^2\mathbb{E}[\sigma_i]^2,
        \end{aligned} \]
        where the second equality follows 
        since $J_{ij}$, $J_{kj}$, $\sigma_i$, 
        and~$\sigma_k$ are independent random variables. In a network 
        with $N$ nodes, there are $6\binom{N}{3}=N(N-1)(N-2)$ possible pairs of 
        this type. 

        \item \emph{Disconnected edges:} The second case
        is where~$(i, j)$ and~$(k, \ell)$ do note share a common node. Then, 
        \[ \begin{aligned}
             \mathbb{E}[A_{ij}(\sigma)A_{k\ell}(\sigma)] &= \mathbb{E}[J_{ij}J_{k\ell}\sigma_i\sigma_j\sigma_k\sigma_\ell] = \mathbb{E}[J_{ij}]^2\mathbb{E}[\sigma_i]^4,
        \end{aligned} \]
         where the second equality follows 
        since $J_{ij}$, $J_{k\ell}$, $\sigma_i$, $\sigma_j$, $\sigma_k$, 
        and~$\sigma_\ell$ are independent random variables. In a network with $N$ nodes, there are $6\binom{N}{4}=\frac{1}{4}N(N-1)(N-2)(N-3)$
        pairs of edges of this type. 
    \end{enumerate}
    It follows that 
    \begin{equation}
        \label{eq:cross-terms}
        \begin{aligned}
            \mathbb{E}\bigg[\sum_{(i, j) \neq (k, \ell)}A_{ij}(\sigma)A_{k\ell}(\sigma) \bigg] &= 6\binom{N}{3}\mathbb{E}[J_{ij}]^2\mathbb{E}[\sigma_i]^2 \\
            &+ 6\binom{N}{4}\mathbb{E}[J_{ij}]^2\mathbb{E}[\sigma_i]^4.
        \end{aligned}
    \end{equation}
    Finally, we combine \eqref{eq:sum-of-squares} and \eqref{eq:cross-terms}, and use the fact that $\operatorname{Var}[\Hc(\sigma)] = \mathbb{E}[\Hc(\sigma)^2] - \mathbb{E}[\Hc(\sigma)]^2$ to obtain~\eqref{eq:ising-variance}. 
 \end{proof}


\subsection{Moments of the Hessian Spectrum}
\label{sec.moments.hessian}
We now examine the effects of the regularization parameters on the spectrum 
of the Hessian matrix corresponding to a 
signed graph $G_{\rm s}(\sigma)$ drawn from the ensemble $\Gc_N(p_1, p_2)$. 
Here, we are interested in the impact of \textit{disordered} choices for the regularization parameter across individual oscillators, by sampling $\mu_i$ 
from some pre-specified probability distribution. We have 
the following result which characterizes the expected value and variance 
of the eigenvalues of $H(\sigma, \mu)$. 

\begin{lemma}\longthmtitle{Moments of Hessian Spectrum}
    \label{lem:hessian-moments}
    Let $G_{\rm s}(\sigma)$ be a random graph drawn from the ensemble
    $\Gc_N(p_1, p_2)$. If $\lambda$ is an eigenvalue drawn 
    uniformly from the spectrum of the Hessian matrix $H(\sigma, \mu)$, then the expected value of $\lambda$ is
    \begin{equation}
    \label{eq:expected-eig}
        \mathbb{E}[\lambda] = (N - 1)\mathbb{E}[A_{ij}(\sigma)] + 2\mathbb{E}[\mu_i]
    \end{equation}
    and the variance of $\lambda$ is 
    \begin{equation}
        \label{eq:var-eig}
        \begin{aligned}
         \operatorname{Var}[\lambda] &= (N - 1)(N - 2)\mathbb{E}[J_{ij}]^2\mathbb{E}[\sigma_i]^2 + 2(N - 1)\mathbb{E}[J_{ij}^2] \\
        &+ 2(N - 1)\mathbb{E}[\mu_i]\mathbb{E}[A_{ij}(\sigma)] + (N-1)^2\mathbb{E}[A_{ij}(\sigma)]^2 \\
        &+4\operatorname{Var}[\mu_i].
    \end{aligned} 
    \end{equation}
    In the special case where $p_2=\frac{1}{2}$ and $\mu_i \sim \mathcal{U}[a, b]$, we have $\mathbb{E}[\lambda] = (a+b)$ and 
        $\operatorname{Var}[\lambda] = 2(N - 1)p_1 + \frac{1}{3}(b - a)^2$.
\end{lemma}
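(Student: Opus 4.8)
The plan is to express both requested moments as ensemble averages of spectral traces, which turns the problem into the entrywise moment computations already available. Conditioning on a single realization of $H(\sigma,\mu)$ and drawing $\lambda$ uniformly from its $N$ eigenvalues gives $\mathbb{E}[\lambda \mid H] = \tfrac{1}{N}\trace(H(\sigma,\mu))$ and $\mathbb{E}[\lambda^2 \mid H] = \tfrac{1}{N}\trace(H(\sigma,\mu)^2)$. Averaging over $\Gc_N(p_1,p_2)$ and the regularization parameters and using the tower property, I would then work with $\mathbb{E}[\lambda] = \tfrac{1}{N}\mathbb{E}[\trace H]$ and $\mathbb{E}[\lambda^2] = \tfrac{1}{N}\mathbb{E}[\trace H^2]$, and obtain the variance as $\operatorname{Var}[\lambda] = \mathbb{E}[\lambda^2] - \mathbb{E}[\lambda]^2$. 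Throughout I would treat each $\mu_i$ as independent of the graph with a common marginal.

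For the mean, I would use $H(\sigma,\mu) = L(\sigma) + 2M$ from \eqref{eq:hessian-laplacian}. Since the signed graph carries no self-loops, $\trace(L(\sigma)) = \sum_{i} L_{ii}(\sigma) = \sum_{i \neq k} A_{ik}(\sigma)$, a sum of $N(N-1)$ identically distributed off-diagonal entries, so $\mathbb{E}[\trace L] = N(N-1)\mathbb{E}[A_{ij}(\sigma)]$; similarly $\mathbb{E}[\trace(2M)] = 2N\mathbb{E}[\mu_i]$. Dividing by $N$ yields \eqref{eq:expected-eig} directly.

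For the variance I would expand $\trace(H^2) = \sum_i H_{ii}^2 + \sum_{i \neq j} H_{ij}^2$. The off-diagonal entries are $H_{ij} = -A_{ij}(\sigma)$, and since $A_{ij}(\sigma)^2 = J_{ij}^2$ the even moment of Lemma \ref{lem:adjacency-moments} gives $\mathbb{E}[\sum_{i\neq j} H_{ij}^2] = N(N-1)\mathbb{E}[J_{ij}^2]$. The diagonal term expands as $\sum_i (L_{ii}(\sigma) + 2\mu_i)^2 = \sum_i (L_{ii}^2 + 4\mu_i L_{ii} + 4\mu_i^2)$; independence of $\mu_i$ from the graph lets me factor $\mathbb{E}[\mu_i L_{ii}] = \mathbb{E}[\mu_i]\,(N-1)\mathbb{E}[A_{ij}(\sigma)]$ using the mean computation, while $\mathbb{E}[\mu_i^2] = \operatorname{Var}[\mu_i] + \mathbb{E}[\mu_i]^2$. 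Collecting these contributions, dividing by $N$, and subtracting $\mathbb{E}[\lambda]^2$ produces \eqref{eq:var-eig}; I would finish by setting $p_2 = \tfrac12$, so that $\mathbb{E}[\sigma_i] = \mathbb{E}[A_{ij}(\sigma)] = 0$, and $\mu_i \sim \mathcal{U}[a,b]$, to recover the stated special case.

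I expect the main obstacle to be the within-row second moment $\mathbb{E}[L_{ii}^2]$. Writing $L_{ii} = \sum_{k \neq i} J_{ik}\sigma_i\sigma_k$ and squaring gives $\sum_{k,\ell \neq i} J_{ik}J_{i\ell}\sigma_k\sigma_\ell$ after using $\sigma_i^2 = 1$, and the care lies in splitting this double sum correctly. The $k = \ell$ diagonal contributes $(N-1)\mathbb{E}[J_{ij}^2]$ because $A_{ik}^2 = J_{ik}^2$, whereas the $(N-1)(N-2)$ terms with $k \neq \ell$ involve the four distinct independent variables $J_{ik}, J_{i\ell}, \sigma_k, \sigma_\ell$ and hence factor into $\mathbb{E}[J_{ij}]^2\mathbb{E}[\sigma_i]^2$. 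Getting both combinatorial multiplicities and the associated independence structure right is the crux of the argument; once this within-row correlation is resolved, the remaining pieces follow by linearity of expectation.
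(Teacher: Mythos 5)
Your setup is identical to the paper's: both moments are reduced to $\tfrac{1}{N}\mathbb{E}[\operatorname{tr}(H)]$ and $\tfrac{1}{N}\mathbb{E}[\operatorname{tr}(H^2)]$ via the tower property, and each trace is evaluated by entrywise independence. (The paper expands $H^2 = L^2 + 2ML + 2LM + 4M^2$ where you split $\operatorname{tr}(H^2) = \sum_i H_{ii}^2 + \sum_{i\neq j}H_{ij}^2$; these are the same computation organized differently.) Your intermediate quantities are all correct and agree with the paper's: $\mathbb{E}[\operatorname{tr}(L^2)] = 2N(N-1)\mathbb{E}[J_{ij}^2] + N(N-1)(N-2)\mathbb{E}[J_{ij}]^2\mathbb{E}[\sigma_i]^2$, $\mathbb{E}[\operatorname{tr}(LM)] = N(N-1)\mathbb{E}[\mu_i]\mathbb{E}[A_{ij}(\sigma)]$, and $\mathbb{E}[\operatorname{tr}(M^2)] = N\mathbb{E}[\mu_i^2]$, and your handling of the within-row correlation in $\mathbb{E}[L_{ii}^2]$ is exactly right.

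The gap is your final assertion that "subtracting $\mathbb{E}[\lambda]^2$ produces \eqref{eq:var-eig}." It does not. From your own pieces,
\begin{align*}
\mathbb{E}[\lambda^2] &= (N-1)(N-2)\mathbb{E}[J_{ij}]^2\mathbb{E}[\sigma_i]^2 + 2(N-1)\mathbb{E}[J_{ij}^2] \\
&\quad + 4(N-1)\mathbb{E}[\mu_i]\mathbb{E}[A_{ij}(\sigma)] + 4\mathbb{E}[\mu_i^2],
\end{align*}
while $\mathbb{E}[\lambda]^2 = (N-1)^2\mathbb{E}[A_{ij}(\sigma)]^2 + 4(N-1)\mathbb{E}[\mu_i]\mathbb{E}[A_{ij}(\sigma)] + 4\mathbb{E}[\mu_i]^2$. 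The $\mu$--$A$ cross terms cancel \emph{exactly}, and the squared-mean term enters with a minus sign, so the subtraction yields
\begin{align*}
\operatorname{Var}[\lambda] &= (N-1)(N-2)\mathbb{E}[J_{ij}]^2\mathbb{E}[\sigma_i]^2 + 2(N-1)\mathbb{E}[J_{ij}^2] \\
&\quad - (N-1)^2\mathbb{E}[A_{ij}(\sigma)]^2 + 4\operatorname{Var}[\mu_i],
\end{align*}
whereas \eqref{eq:var-eig} as printed retains a cross term $2(N-1)\mathbb{E}[\mu_i]\mathbb{E}[A_{ij}(\sigma)]$ and carries $+(N-1)^2\mathbb{E}[A_{ij}(\sigma)]^2$. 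The printed formula cannot be correct: with $N=2$, $p_1=p_2=1$, and deterministic $\mu_i \equiv 100$ it evaluates to $2 - 200 + 1 = -197 < 0$, while the corrected expression gives $1$, matching the directly computed spectrum of $H$ in that case. The paper's own proof makes the same elision ("combining the previous expressions, we obtain \eqref{eq:var-eig}"), so you have faithfully reproduced its route---but a complete proof must carry out this last step, at which point one either states the corrected formula above or flags the sign/cancellation errors in \eqref{eq:var-eig}. Note that the special case $p_2 = \tfrac{1}{2}$, $\mu_i \sim \mathcal{U}[a,b]$ is unaffected, since then $\mathbb{E}[\sigma_i] = \mathbb{E}[A_{ij}(\sigma)] = 0$ and both expressions collapse to $2(N-1)p_1 + \tfrac{1}{3}(b-a)^2$.
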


\begin{proof}
    The expected value of $\lambda$ can 
    be computed using the trace of the Hessian matrix, 
    as follows. 
    \[
        \begin{aligned}
            \mathbb{E}[\lambda] &= \frac{1}{N}\mathbb{E}\left[ \operatorname{tr}\left(H(\sigma, \mu)\right) \right] = \frac{1}{N}\mathbb{E}[\operatorname{tr}(L(\sigma) + 2M)] \\
            &= \frac{1}{N}\mathbb{E}[\operatorname{tr}(L(\sigma))] + 2\mathbb{E}[\mu_i].
        \end{aligned}
    \]
    Using that $\Hc(\sigma) = -\frac{1}{2}\trace(L(\sigma))$, we obtain~\eqref{eq:expected-eig}.  

    We now compute the variance $\operatorname{Var}[\lambda] = \mathbb{E}[\lambda^2] - \mathbb{E}[\lambda]^2$. Note
    that $\mathbb{E}[\lambda^2] = \frac{1}{N}\mathbb{E}[\operatorname{tr}(H(\sigma, \mu)^2)]$, where 
    \[ H(\sigma, \mu)^2 = L(\sigma)^2 + 2ML(\sigma) + 2L(\sigma)M + 4M^2,\]
    and $M = \diag(\mu)$. We compute the trace of each of the terms in the previous expression. 
    The trace of $L(\sigma)^2$ is
    \[ \trace(L(\sigma)^2) = \sum_{i=1}^{N}\sum_{j\neq k}J_{ij}J_{jk}\sigma_i\sigma_{k} + 2\sum_{i=1}^{N}\sum_{j=1}^{N}J_{ij}^2,   \]
    so it follows that 
    \begin{equation}
    \label{eq:exp-L-sq}
        \begin{aligned}
        \mathbb{E}[\trace(L(\sigma)^2)] = &N(N-1)(N-2)\mathbb{E}[J_{ij}]^2\mathbb{E}[\sigma_i]^2\\ &+ 2N(N - 1)\mathbb{E}[J_{ij}^2]. 
    \end{aligned}
    \end{equation}
    Next, observe that 
    \[ \begin{aligned}
        \trace(L(\sigma)M) = \trace(ML(\sigma)) = \sum_{i=1}^{N}\sum_{j=1}^{N}\mu_iA_{ij}(\sigma).
    \end{aligned}  \]
    Since $\mu_i$ is independent of $A_{ij}(\sigma)$, it follows that 
    \begin{equation}
        \label{eq:exp-LM}
        \mathbb{E}[\trace(L(\sigma)M)] = N(N - 1)\mathbb{E}[\mu_i]\mathbb{E}[A_{ij}(\sigma)].
    \end{equation}
    Finally, we have $\mathbb{E}[\trace(M^2)] = N\mathbb{E}[\mu_i^2]$. Combining the previous expressions, we obtain~\eqref{eq:var-eig}. 
    The expressions for $p_2 = \frac{1}{2}$ and $\mu_i\sim\mathcal U[a,b]$ then follow trivially by noting that $\mathbb{E}[\mu_i] = \frac{1}{2}(a+b)$ and $\operatorname{Var}[\mu_i]=\frac{1}{12}(b-a)^2$.
\end{proof}

\subsection{Conditional Moments of the Hessian Spectrum}
\label{sec.moments.conditional}
Building on the results in Section \ref{sec.moments.hamiltonian} and 
Section \ref{sec.moments.hessian}, we can now elucidate the relationship 
between the value of the Ising Hamiltonian and the eigenvalues of the 
Hessian matrix for signed graphs drawn from the ensemble $\Gc_N(p_1, p_2)$. To do this, 
we compute the conditional expectation and estimate the 
variance of the eigenvalues of $H(\sigma, \mu)$ given that 
the spin configuration is at energy level $\Hc(\sigma) = h$. This allows 
us to understand how varying the energy level affects the distribution
of eigenvalues of the Hessian matrix, and in particular, conclude that lower energy 
states are more likely to be stable. 
We thus first establish the following result. 

\begin{theorem}\longthmtitle{Conditional Moment of Hessian Spectrum}
    \label{lem:conditiona-moments}
    Consider the ensemble $\Gc_N(p_1, p_2)$ where $p_2=\frac{1}{2}$, and 
    suppose that $\mu_i \sim \Uc[a, b]$. The conditional expected value of 
    the eigenvalues of $H(\sigma, \mu)$ at energy level $\Hc(\sigma, \mu) = h$~is
    \[ \mathbb{E}[\lambda \mid \mathcal H(\sigma)=h] = - \frac{2}{N}h + (a+b). \]
\end{theorem}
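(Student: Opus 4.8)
The plan is to reduce the conditional expectation to a single trace computation, mirroring the unconditional argument in Lemma~\ref{lem:hessian-moments}, and then to exploit two structural facts: the deterministic link between $\trace(L(\sigma))$ and $\mathcal H(\sigma)$, and the independence of the regularization parameters from the spin configuration.

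First, since $\lambda$ is drawn uniformly from the $N$ eigenvalues of $H(\sigma, \mu)$, I would write
\[ \mathbb{E}[\lambda \mid \mathcal H(\sigma) = h] = \frac{1}{N}\,\mathbb{E}\big[\trace(H(\sigma, \mu)) \mid \mathcal H(\sigma) = h\big]. \]
Using the decomposition $H(\sigma, \mu) = L(\sigma) + 2M$ from~\eqref{eq:hessian-laplacian} together with linearity of the trace and of conditional expectation, this splits into a Laplacian contribution and a regularization contribution,
\[ \frac{1}{N}\,\mathbb{E}\big[\trace(L(\sigma)) \mid \mathcal H(\sigma) = h\big] + \frac{2}{N}\,\mathbb{E}\big[\trace(M) \mid \mathcal H(\sigma) = h\big], \]
which I would then evaluate term by term.

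The first term is where the conditioning does all the work: by~\eqref{eq:hamiltonian-laplacian} we have $\trace(L(\sigma)) = -2\mathcal H(\sigma)$, so on the event $\{\mathcal H(\sigma) = h\}$ this quantity equals $-2h$ deterministically, contributing $-2h/N$. I would emphasize this step, since it is precisely where the energy level enters, converting the conditioning into an exact value rather than an expectation.

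For the second term, the key observation is that the parameters $\mu_i$ are sampled independently of $J$ and $\sigma$, and hence of $\mathcal H(\sigma)$; consequently, conditioning on $\{\mathcal H(\sigma) = h\}$ leaves the law of $M = \diag(\mu)$ unchanged, so that $\mathbb{E}[\trace(M) \mid \mathcal H(\sigma) = h] = N\,\mathbb{E}[\mu_i] = N(a+b)/2$ for $\mu_i \sim \Uc[a, b]$. Adding the two contributions gives $-2h/N + (a+b)$, as claimed. The only point requiring genuine care is this independence claim, which ensures that the mean of the spectrum inherits its energy dependence entirely from the Laplacian term while the regularization term merely shifts it by the constant $(a+b)$; the remaining manipulations are routine. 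As a consistency check, averaging over $h$ recovers the unconditional value $\mathbb{E}[\lambda] = (a+b)$ from Lemma~\ref{lem:hessian-moments}, since $\mathbb{E}[\mathcal H(\sigma)] = 0$ when $p_2 = \tfrac{1}{2}$.
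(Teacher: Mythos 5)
Your proposal is correct and follows essentially the same route as the paper's proof: reduce the conditional mean of the spectrum to $\frac{1}{N}\mathbb{E}[\trace(H(\sigma,\mu)) \mid \mathcal H(\sigma)=h]$, split via $H = L(\sigma) + 2M$, use $\trace(L(\sigma)) = -2\mathcal H(\sigma)$ to make the Laplacian term deterministic under the conditioning, and use independence of $\mu$ from $(J,\sigma)$ to evaluate the regularization term as $2\mathbb{E}[\mu_i] = a+b$. Your added remarks (the explicit independence justification and the consistency check against the unconditional moment) are sound refinements of the same argument, not a different approach.
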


\begin{proof}
The conditional expectation can be calculated~as
\begin{equation}
\begin{aligned}
\mathbb{E}[\lambda \mid& \mathcal H(\sigma)=h] = \frac{1}{N}\mathbb{E}[\operatorname{tr}(H(\sigma, \mu)) \mid \mathcal H(\sigma) = h] \\
&= \frac{1}{N}\mathbb E\left[\operatorname{tr}(L(\sigma)) + 2\trace({M}) \,\Big|\, \mathcal H(\sigma) = h\right] \\
&= \frac{1}{N}\Big(\mathbb E[-2\mathcal H(\sigma) \mid \mathcal H(\sigma)= h] + 2N\mathbb E[\mu_i]\Big) \\
&= - \frac{2}{N}h + (a+b).
\end{aligned}
\label{eq.condexp.lambda}
\end{equation}
\end{proof}

We now present a conjecture that the conditional variance 
of the eigenvalues can be estimated 
by a quadratic function of~$h$ for ensembles 
that tend to generate sparse graphs, i.e. when~$p_1 \ll 1$. 
Although we do not have an analytic expression for the constant $c$ 
in~\eqref{eq:conditional-var-final}, 
it can be estimated numerically. 
As we show in Section~\ref{sec:numerical-results}, 
this theoretical prediction agrees well with the  
empirically computed conditional variance 
in numerical experiments. 

\begin{conjecture}
    Consider the ensemble $\Gc_N(p_1, p_2)$, 
    where~$p_1 \ll 1$ and $p_2=\frac{1}{2}$, 
    and suppose that $\mu_i \sim \Uc[a, b]$. 
    There exists a unique constant $c \in \real$, depending only on~$N$, $p_1$, $a$
    and $b$, where the conditional variance can be approximated 
    by the following quadratic function
    \begin{equation}
        \label{eq:conditional-var-final}
        \begin{aligned}
        \operatorname{Var}[\lambda \mid \Hc(\sigma) = h] \approx &\left(\frac{c}{N}- \frac{4}{N^2}\right)h^2 + \frac{(b-a)^2}{3}  \\
        & + 2(N - 1)p_1.
    \end{aligned}
    \end{equation}
\label{conjecture}
\end{conjecture}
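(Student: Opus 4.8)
The plan is to reduce the conjecture to estimating a single scalar quantity, the conditional expected number of present edges $\bar m(h) := \mathbb{E}[m \mid \Hc(\sigma)=h]$ with $m = \sum_{i<j}A_{ij}(\sigma)^2$, and then to approximate $\bar m(h)$ by a quadratic in $h$ in the sparse regime. I would first carry out an exact reduction paralleling the proof of Theorem~\ref{lem:conditiona-moments}. Writing $\operatorname{Var}[\lambda\mid\Hc(\sigma)=h] = \mathbb{E}[\lambda^2\mid h] - \mathbb{E}[\lambda\mid h]^2$ and using $\mathbb{E}[\lambda^2\mid h] = \frac{1}{N}\mathbb{E}[\trace(\Hess^2)\mid h]$ with $\Hess = L + 2M$, so that $\trace(\Hess^2) = \trace(L^2) + 4\trace(LM) + 4\trace(M^2)$. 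Since the $\mu_i$ are independent of $\sigma$ and of the graph, $\mathbb{E}[\trace(LM)\mid h] = \mathbb{E}[\mu_i]\,\mathbb{E}[\trace(L)\mid h] = -2h\,\mathbb{E}[\mu_i]$ (using $\trace(L(\sigma)) = -2\Hc(\sigma)$) and $\mathbb{E}[\trace(M^2)] = N\mathbb{E}[\mu_i^2]$. Substituting $\mathbb{E}[\lambda\mid h] = -\frac{2}{N}h + (a+b)$ from Theorem~\ref{lem:conditiona-moments}, the terms linear in $h$ and the $(a+b)^2$ terms cancel, leaving the exact identity $\operatorname{Var}[\lambda\mid h] = \frac{1}{N}\mathbb{E}[\trace(L^2)\mid h] + \frac{(b-a)^2}{3} - \frac{4}{N^2}h^2$, which already reproduces the $\frac{(b-a)^2}{3}$ and $-\frac{4}{N^2}h^2$ terms of the claim exactly.

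Second, I would evaluate $\mathbb{E}[\trace(L^2)\mid h]$ combinatorially. Using $L_{ii} = \sum_k A_{ik}$ and $L_{ij} = -A_{ij}$, one obtains $\trace(L^2) = 4m + T$, where $T = \sum_i\sum_{k\ne l,\,k,l\ne i}A_{ik}A_{il}$ is the sum over pairs of distinct edges sharing a vertex. With $p_2=\frac{1}{2}$ the weights $A_{ij}(\sigma)$ are i.i.d.\ and symmetric, so conditioning on $\Hc(\sigma)=h$---equivalently on the signed edge sum $S = \sum_{i<j}A_{ij} = -h$---leaves them exchangeable. Hence every pair of distinct edges carries the same conditional correlation $\rho(h) = \mathbb{E}[A_e A_{e'}\mid h]$, giving $\mathbb{E}[T\mid h] = N(N-1)(N-2)\rho(h)$. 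The key step is the identity $S^2 = m + \sum_{e\ne e'}A_e A_{e'}$, whose conditional expectation yields $h^2 = \bar m(h) + \binom{N}{2}\big(\binom{N}{2}-1\big)\rho(h)$, so that $\rho(h) = \big(h^2-\bar m(h)\big)\big/\big[\binom{N}{2}(\binom{N}{2}-1)\big]$. Combining these produces the closed form $\mathbb{E}[\trace(L^2)\mid h] = \frac{4}{N+1}\big(N\bar m(h) + h^2\big)$, reducing the entire problem to $\bar m(h)$.

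Third---and this is the only inexact step, hence the source of the ``$\approx$'' and the numerically fit constant $c$---I would estimate $\bar m(h) = \binom{N}{2}\,\mathbb{P}(\text{a fixed edge is present}\mid S=-h)$, again via exchangeability. A local central limit (Edgeworth-type) expansion of the sum $S'$ of the remaining $\binom{N}{2}-1$ edge weights, which has variance $\approx\binom{N}{2}p_1$, shows that in the sparse regime $p_1\ll1$ and over the accessible range $|h| = O\big(\sqrt{\binom{N}{2}p_1}\big)$ one has $\bar m(h) \approx \binom{N}{2}p_1 + \kappa h^2$ with $\kappa \approx \frac{1-p_1}{N(N-1)p_1}$. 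Substituting into the closed form, the constant term collapses to $\frac{4}{N+1}\binom{N}{2}p_1 = \frac{2N(N-1)}{N+1}p_1 \approx 2(N-1)p_1$, while the residual $h^2$ contributions (from $\kappa$ and from the $\frac{4h^2}{N+1}$ term) collect into a single coefficient of the form $\frac{c}{N} - \frac{4}{N^2}$, establishing the stated quadratic. The main obstacle is precisely this last step: making the local CLT expansion of $\bar m(h)$ rigorous and uniform in $h$, bounding its error, and pinning down the exact value of $c$. Since the approximation is expected to hold only for sparse ensembles and moderate $|h|$, this is why the result is posed as a conjecture with $c$ estimated numerically rather than proved as a theorem.
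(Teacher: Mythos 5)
Your opening reduction is correct, and in fact tighter than the paper's own sketch: since $\mu$ is independent of $(J,\sigma)$, you get $\mathbb{E}[\trace(LM)\mid \Hc(\sigma)=h] = \mathbb{E}[\mu_i]\,\trace$-wise $= -2h\,\mathbb{E}[\mu_i]$ \emph{exactly}, and the identity $\operatorname{Var}[\lambda\mid h] = \frac{1}{N}\mathbb{E}[\trace(L^2)\mid h] + \frac{(b-a)^2}{3} - \frac{4}{N^2}h^2$ is exact, correctly isolating all uncertainty in the graph term (the paper instead routes the cross term through a least-squares ansatz plus the sparse approximation $\mathbb{E}[\trace(L^2)\Hc]\approx 0$). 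The genuine gap is your second step. The edge variables $A_{ij}(\sigma)=J_{ij}\sigma_i\sigma_j$ are \emph{not} i.i.d., and they are not exchangeable, conditionally or otherwise: with $p_2=\frac{1}{2}$ they are identically distributed and pairwise independent, but the spins impose parity constraints on cycles — if all three edges of a triangle are present, then $A_{12}A_{23}A_{13} = -(\sigma_1\sigma_2\sigma_3)^2 = -1$, so the three signs can never all be positive. This is precisely the frustration phenomenon the paper is built around. Consequently, after conditioning on $\Hc(\sigma)=h$, the joint law of a pair of edges depends on whether they share a vertex (node relabelings act transitively on adjacent pairs and on disjoint pairs \emph{separately}), so there are two conditional correlations $\rho_{\rm adj}(h)\neq\rho_{\rm dis}(h)$ in general, not one $\rho(h)$. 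Your $T$ involves only adjacent pairs while $S^2$ mixes both types, so the ``closed form'' $\mathbb{E}[\trace(L^2)\mid h] = \frac{4}{N+1}\big(N\bar m(h)+h^2\big)$ is not exact as claimed; it is itself an uncontrolled approximation, contradicting your assertion that the local-CLT step is the only inexact one. A visible symptom is your constant term $\frac{2N(N-1)}{N+1}p_1$ versus the stated $2(N-1)p_1$ — these agree only to leading order in $N$. (Your formula does pass the unconditional check $\mathbb{E}[\trace(L^2)]=2N(N-1)p_1$, but only because both correlations vanish unconditionally at $p_2=\frac{1}{2}$.)

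It is worth noting that your route, if repaired, would be strictly more informative than the paper's. The paper never attempts to compute the quadratic coefficient: it posits the minimum-MSE quadratic approximant of $\mathbb{E}[T\mid h]$, pins down its constant and linear coefficients from unconditional moments, and leaves $c$ to numerical fitting; your argument would deliver an explicit $c$ via the tilting estimate $\kappa\approx\frac{1-p_1}{N(N-1)p_1}$ (which is computed correctly \emph{under the i.i.d. pretense}). To repair it you would need to (i) carry two correlation parameters, in which case the single equation from $S^2$ leaves $\mathbb{E}[T\mid h]=N(N-1)(N-2)\rho_{\rm adj}(h)$ undetermined, and (ii) prove a sparse-regime estimate that $\rho_{\rm adj}(h)-\rho_{\rm dis}(h)$ is negligible for $p_1\ll 1$ and moderate $|h|$, i.e., that cycle constraints contribute at higher order in $p_1$. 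That missing estimate is exactly the kind of control whose absence is why the paper states this result as a conjecture rather than a theorem, and it is consistent with the paper's numerics, where the prediction degrades precisely for dense ($p_1$ large) and biased ($p_2\neq\frac{1}{2}$) ensembles where such constraints bind.
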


\smallskip
\begin{proofsketch}
The conditional variance is
\begin{equation}
\label{eq.condvar.definition}
\begin{aligned}
\operatorname{Var}[\lambda \mid \mathcal H=h] 
%
&= \mathbb{E}[\lambda^2 \mid \mathcal H=h] - \mathbb{E}[\lambda \mid \mathcal H=h]^2.
\end{aligned}
\end{equation}
Here, the conditional expectation $\mathbb{E}[\lambda^2 | \mathcal H = h]$ is
\begin{equation}
    \label{eq:cond-exp-lambda-sq}
    \begin{aligned}
    \mathbb{E}[&\lambda^2 \mid \Hc = h] \\
    &= \frac{1}{N}\mathbb{E}[\trace(L^2 + 2LM + 2ML + 4M^2) \mid \Hc = h] \\
    &= \frac{1}{N}\mathbb{E}[T(\sigma, \mu) \mid \Hc = h] + 4\mathbb{E}[\mu^2_i]. 
\end{aligned}
\end{equation}
where $T(\sigma, \mu) = \trace(L^2(\sigma) + 2ML(\sigma) + 2L(\sigma)M)$. 
Let $f(h)$ be the unique quadratic function 
that approximates $\mathbb{E}[T(\sigma) \mid \Hc = h]$
with minimum mean squared error. 
It can be shown 
that there exists a constant $c$ such that
\[ \begin{aligned}
    f(h) = &\mathbb{E}[T] + \frac{\operatorname{Cov}(T, \Hc)}{\operatorname{Var}(\Hc)}(h - \mathbb{E}[\Hc]) + c(h - \mathbb{E}[\Hc])^2,
\end{aligned} \]
where $c$ depends only on the parameters that define 
the ensemble $\Gc_N(p_1, p_2)$ and the distribution 
$\Uc[a, b]$.
Because $p_2=\frac{1}{2}$, we have $\mathbb{E}[\Hc(\sigma)] = 0$. 
By \eqref{eq:exp-L-sq} and \eqref{eq:exp-LM}, we have
\[ \begin{aligned}
    \mathbb{E}[T(\sigma)] &= \mathbb{E}[\trace(L(\sigma)^2)] + 4\mathbb{E}[\trace(ML(\sigma))] \\
    &=2N(N - 1)p_1. 
\end{aligned} \]
Next, observe that 
\[ \begin{aligned}
    \operatorname{Cov}[T, \Hc] &= \mathbb{E}[T(\sigma, \mu)\Hc(\sigma)] \\
    &= \mathbb{E}[\trace(L(\sigma)^2)\Hc(\sigma)] + 4\mathbb{E}[\trace(ML(\sigma))\Hc(\sigma)].
\end{aligned}  \]
To simplify the expression for the covariance, 
we note that when $p_1 \ll 1$, i.e. the sampled graphs tend to be
sparse, 
$\mathbb{E}[\trace(L(\sigma)^2)\Hc(\sigma)] \approx 0$ since 
the expectation can be expressed as the sum of terms proportional to $\mathbb{E}[J_{ij}^2]$, which is small when $p_1$ is small. By expanding 
$\trace(ML(\sigma))\Hc(\sigma)$ and using the fact that $\mu_i$ is independent 
of $\Hc(\sigma)$, we obtain $\mathbb{E}[\trace(ML(\sigma))\Hc(\sigma)] = - 2\mathbb{E}[\mu_i]\mathbb{E}[\Hc(\sigma)^2]$. Thus, 
\begin{equation}
    \label{eq:quadratic-approx}
    f(h) = 2N(N-1)p_1 - 4(a + b)h + ch^2.
\end{equation}
Finally, by inspecting \eqref{eq:cond-exp-lambda-sq}, we see that we 
can approximate $\mathbb{E}[\lambda^2 \mid \Hc = h]$ with the the map $h \mapsto \frac{1}{N}f(h) + 4\mathbb{E}[\mu^2_i]$. By substituting this into~\eqref{eq.condvar.definition}, we get the approximation 
of the conditional variance in~\eqref{eq:conditional-var-final}. 
\end{proofsketch}

\begin{figure*}
    \centering
    \includegraphics[width=0.85\linewidth]{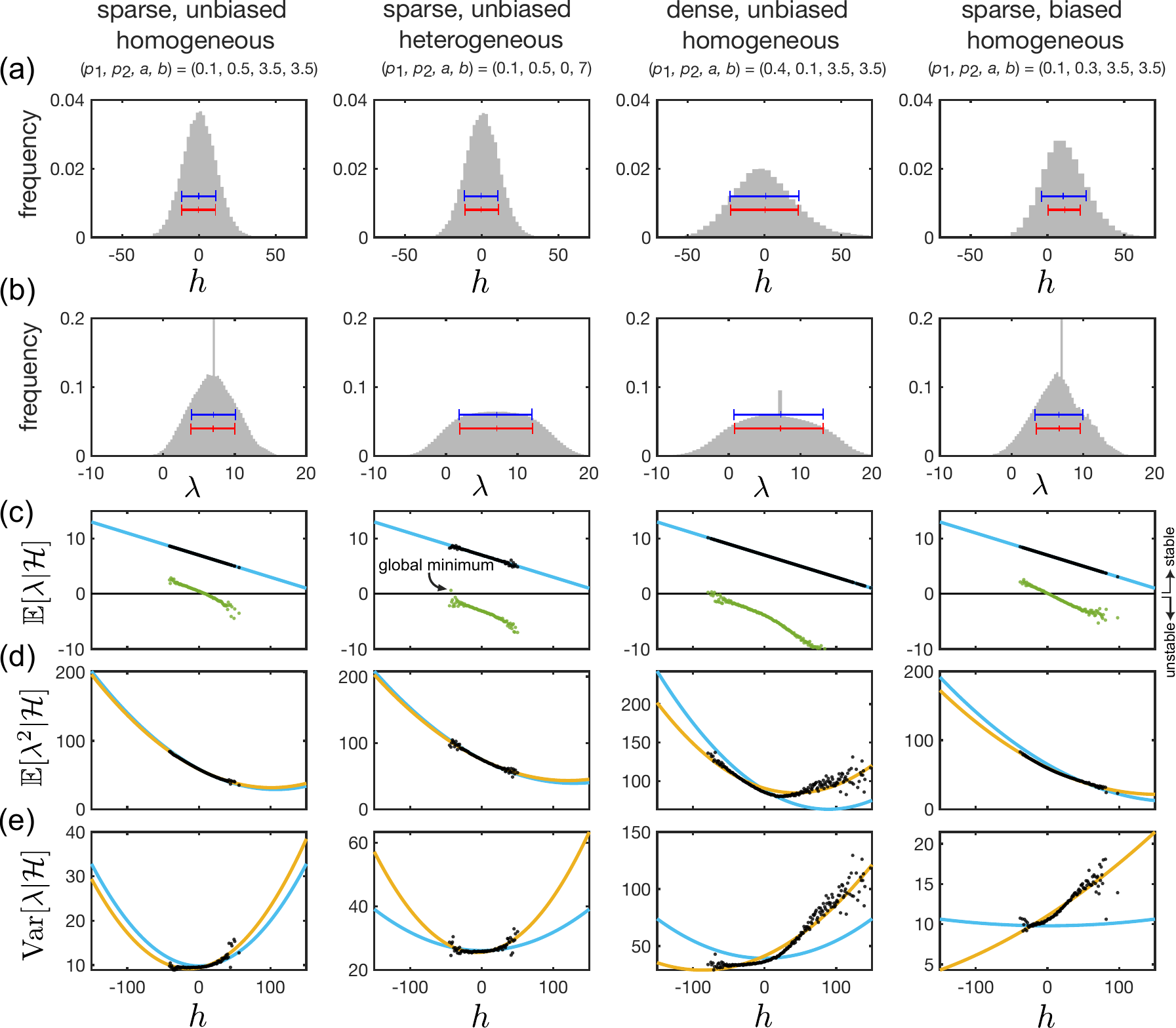}
    \caption{Statistical analysis of the Ising energy and the stability of OIMs.
    \textbf{(a, b)} Distributions of the Ising Hamiltonian energy $\mathcal H$ (a) and Hessian eigenvalues $\lambda$ (b) across $10^5$ realizations of signed graphs from the ensemble $\mathcal G$.
    The horizontal bars indicate the mean $\pm$ standard deviation, with blue and red colors corresponding to theoretical predictions and numerical estimates, respectively.
    \textbf{(c, d, e)} Conditional moments for given values of the Ising Hamiltonian energy. The theoretical predictions are shown by blue lines, while black data points represent numerical estimates. In panel c, green points represent numerical estimates of the conditional expectation of the \textit{smallest} Hessian eigenvalue (i.e., $\mathbb{E}[\lambda_{\rm min} | \mathcal H]$).
    In panels d and e, the yellow line shows a second-order polynomial fit for comparison.
    In all plots, the statistical results were obtained for graphs of $50$ nodes, with the probabilities $p_1$ and $p_2$, and the heterogeneity interval $[a,b]$ reported in the corresponding columns.
    }
    \label{fig.moments}
\end{figure*}

\section{Design Principles for Ising Machines}
\label{sec:numerical-results}


In this section, we demonstrate our theoretic results and their 
implications on the design of OIMs using numerical simulations, and
depict our results in Figure~\ref{fig.moments}. In particular, 
we demonstrate that the conditional variance of the eigenvalues 
of the Hessian increases at extreme values of the Ising energy
when we introduce heterogeneity in the values of the regularization 
parameters, increasing the likelihood that globally optimal spin 
configurations are stable. We analyze the following scenarios: i) homogeneous versus heterogeneous regularization parameters $\mu_i$, ii) sparse and dense graphs $G$, and iii) unbiased and biased distributions of spin configurations (with $p_2 = 0.5$ and $p_2 = 0.3$, respectively).
Following Lemmas~\ref{lem:hamiltonian-moments} and \ref{lem:hessian-moments}, our analytical predictions for the means and variances show strong agreement with the empirical distributions of  Ising energies and Hessian eigenvalues (Fig.~\ref{fig.moments}a,b). As anticipated by Lemma~\ref{lem:hamiltonian-moments}, Fig.~\ref{fig.moments}a shows that the distributions of Ising energies become broader with increasing $p_1$. Moreover, when $p_2<0.5$, $\mathbb E[A_{ij}]$ is larger, and hence the distribution is shifted rightward. For the unbiased case $p_2=0.5$, Fig.~\ref{fig.moments}b shows that the eigenvalue variance increases with the network connectivity (higher $p_1$) and parameter heterogeneity (wider interval $[a,b]$).

Fig.~\ref{fig.moments}c--e illustrates the theoretical predictions of the conditional moments. The predicted conditional expectation $\mathbb E[\lambda | \mathcal H]$ by Theorem~\ref{lem:conditiona-moments} is precise for all ensembles, confirming that the eigenvalues linearly decrease on average as a function of the Ising energy. Hence, equilibria associated with spin configurations of higher energy are more likely to be stable. 
To compute \eqref{eq:conditional-var-final} for the conditional variance, we use data collected from each network ensemble to fit the corresponding constant $c$. 
Notably, the theoretical prediction for the conditional variance $\operatorname{Var}[\lambda | \mathcal H]$ is also accurate when graphs are sparse and unbiased\textemdash which is expected given the approximations underlying Conjecture~\ref{conjecture}. However, the predictions deviate strongly for biased signed graphs ($p_2 = 0.3$) and dense graphs ($p_1 =0.1$). To investigate this issue, we present a second-order polynomial fitting of the data points in Figs. \ref{fig.moments}d,e. It shows that, for large $p_1$ and $p_2\neq 0.5$, the conditional expectation $\mathbb E[\lambda^2 | \mathcal H]$  deviates from our prediction \eqref{eq:cond-exp-lambda-sq}, which directly impacts the approximation~\eqref{eq:conditional-var-final}. Providing a precise analytical second-order expression for $\mathbb E[\lambda^2 | \mathcal H]$ is challenging but likely to yield accurate predictions, a task that we expect to address in future work. 

Crucially, the conditional eigenvalue variance is higher at extreme values of the Ising energy. This finding has direct implications for parameter tuning and the convergence behavior of OIMs. From Eq.~\eqref{eq:conditional-var-final}, it is evident that increasing the variance in the parameter distribution $\mu_i$ (e.g., the width of interval $[a,b]$) leads to a higher conditional variance. This result, combined with the fact that the conditional expectation 
trends downward for higher Ising energies, increases the likelihood that the \textit{smallest} eigenvalue $\lambda_{\rm min}(\Hess(\sigma,\mu))$ associated with the global minimum is considerably larger than those corresponding to suboptimal solutions. This effect is illustrated in Fig.~\ref{fig.moments}e, which shows that the spectral gap between globally optimal and suboptimal equilibria is larger for heterogeneous parameter choices, despite the average parameter being identical to the homogeneous case. Consequently, for a particular network ensemble, one can design the parameter distribution such that equilibria associated with the global minima are stable while the remaining equilibria (associated with suboptimal solutions) are left unstable. 

\section{Conclusion}

This paper presents a theoretical and statistical analysis of the stability properties of OIMs through the lens of signed graph theory. By relating the Ising Hamiltonian to the Laplacian spectrum, we establish a connection between the spin configurations of the Ising optimization problem and the stability of the corresponding equilibria in OIMs. For frustration-free networks, we provide formal guarantees of convergence, which can be used to properly tune the regularization parameter. In such cases, it is possible to choose a homogeneous regularization value such that the global optimum spin configuration is stable, while all other suboptimal configurations are unstable. In the presence of frustration, we analyze random network ensembles and demonstrate that low-energy spin configurations are statistically more likely to to be stable. Finally, we show that introducing heterogeneity in the regularization parameters can enhance convergence toward global minimizers. While we focused on uniform parameter distributions, our framework extends naturally to arbitrary distributions.

Our results align with recent studies emphasizing the beneficial role of disorder in optimization and synchronization. In the context of Ising machines, related approaches have introduced stochastic perturbations\textemdash such as chaotic noise \cite{lee2025noise} and probabilistic simulated annealing \cite{raimondo2025high}\textemdash to dynamically adapt the system parameters. In synchronization theory, parameter heterogeneity has also been shown to enhance synchronization stability \cite{zhang2021random}. In future work, we will focus on characterizing the moments of extremal eigenvalues, e.g., $\lambda_{\rm min}(\Hess (\sigma,\mu))$, in order to develop dynamic tuning strategies for the regularization parameter. This line of research is expected to translate the mathematical formalism and insights presented here into practical design guidelines for OIMs and related Ising-inspired hardware.

\bibliographystyle{ieeetr}



\end{document}